\renewcommand{\qed}{\hfill\small{$\square$}\normalsize}
\theoremstyle{definition}
\newtheorem{lemma}{Lemma}[section]
\newtheorem{definition}[lemma]{Definition}
\newtheorem{proposition}[lemma]{Proposition}
\newtheorem{theorem}[lemma]{Theorem}
\newtheorem{remark}{Remark}
\numberwithin{equation}{section}
\renewcommand{\qed}{\hfill\small{$\square$}\normalsize}
\DeclareFixedFont{\Acknowledgment}{OT1}{cmr}{bx}{n}{14pt}
\begin{document}

\title{A Note on Discrete Einstein Metrics}
\author{Huabin Ge, Jinlong Mei, Da Zhou}

\maketitle

\begin{abstract}
In this note, we prove that the space of all admissible piecewise linear metrics parameterized by the square of length on a triangulated manifold is a convex cone. We further study Regge's Einstein-Hilbert action and give a more reasonable definition of discrete Einstein metric than the former version in \cite{G}. Finally, we introduce a discrete Ricci flow for three dimensional triangulated manifolds, which is closely related to the existence of discrete Einstein metrics.
\end{abstract}

\section{The space of piecewise linear metrics}
Consider an $n$ dimensional compact manifold $M$ with a triangulation $\mathcal{T}$. The triangulation is written as $\mathcal{T}=\{\mathcal{T}_0,\mathcal{T}_1,\cdots,\mathcal{T}_n\}$, where $\mathcal{T}_i$ ($0\leq i\leq n$) represents the set of all $i$ dimensional simplices. A piecewise linear metric is a map $l:\mathcal{T}_1\rightarrow (0,+\infty)$ making each simplex an Euclidean simplex.

There are two disadvantages to think of $l$ as the analogue of smooth Riemannian metric tensor $g$. For one thing, we know that $\mathfrak{M}_{\mathcal{T}}$, the space of all admissible piecewise linear metrics, is not convex (although it is a simply connected open set). For another, the scaling property of $l$ is not good enough. If the smooth Riemannian metric tensor $g$ scales to $cg$ in the smooth manifold $M^n$, then the length $l(\gamma)$ of a curve $\gamma:[0,1]\rightarrow M$ scales to $\sqrt{c}l(\gamma)$.

If we take $l^2$ as the direct analogue of metric tensor $g$, both the above two disadvantages can be overcome. The idea of considering the square of $l$, not $l$ itself, as an analogue of smooth Riemannian metric tensor comes naturally from the former work by the first author and Xu \cite{GX1}, where the idea has been used for piecewise linear manifolds with circle or sphere packing metrics.
Firstly, we have
\begin{theorem}\label{Thm-converx-metric-spc}
For manifold $M^n$ with triangulation $\mathcal{T}$, denote $g_{ij}=l_{ij}^2$ for each adjacent edge $i\thicksim j$. Then $\mathfrak{M}_{\mathcal{T}}^2$, the space of all admissible piecewise linear metrics parameterized by $g_{ij}$, is a nonempty connected open convex cone.
\end{theorem}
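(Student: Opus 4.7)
The plan is to recognize that admissibility, when re-expressed in the squared-length coordinates $g_{ij}=l_{ij}^2$, becomes a linear positivity condition on a symmetric matrix, from which the convex-cone structure drops out almost automatically.

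For each top-dimensional simplex $\sigma=[v_0,v_1,\cdots,v_n]\in\mathcal{T}_n$, I would fix $v_0$ as a base point and set $e_i=v_i-v_0$. If $\sigma$ is realized in $\mathbb{R}^n$, its Gram matrix $G^\sigma$ with entries $G^\sigma_{ij}=\langle e_i,e_j\rangle$ satisfies, by the polarization identity,
$$G^\sigma_{ij}=\tfrac{1}{2}\bigl(g_{0i}+g_{0j}-g_{ij}\bigr),\qquad 1\leq i,j\leq n,$$
with the convention $g_{ii}=0$, so the entries of $G^\sigma$ are \emph{linear} functions of the squared edge lengths of $\sigma$. Conversely, a prescribed assignment of positive $g_{ij}$ is realized by a genuine Euclidean $n$-simplex if and only if the associated symmetric matrix $G^\sigma(g)$ is positive definite; in that case all principal minors are positive, so every sub-simplex of $\sigma$ is automatically Euclidean too, meaning the usual admissibility condition for all simplices in $\sigma$ reduces to this single matrix condition.

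Next I would invoke the standard fact that the set $\mathcal{P}_n$ of positive definite symmetric $n\times n$ matrices is an open convex cone inside the vector space of symmetric matrices. Pulling back through the linear map $g\mapsto G^\sigma(g)$ (and extending trivially in the edge variables not appearing in $\sigma$), the set
$$\mathfrak{M}^2_\sigma=\{\,g\mid G^\sigma(g)\succ 0\,\}$$
is an open convex cone in $\mathbb{R}^{|\mathcal{T}_1|}$. Since admissibility on the whole triangulation is the conjunction of admissibility on each top simplex,
$$\mathfrak{M}^2_{\mathcal{T}}=\bigcap_{\sigma\in\mathcal{T}_n}\mathfrak{M}^2_\sigma,$$
which is a finite intersection of open convex cones and hence itself an open convex cone. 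Nonemptiness is inherited from $\mathfrak{M}_{\mathcal{T}}\neq\varnothing$ (any uniform metric on the triangulation provides an explicit point), and connectedness is immediate from convexity.

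The only real content is choosing the right coordinates: in the $l$-variables the admissibility inequalities (triangle inequality and its higher-dimensional analogues coming from the Cayley--Menger determinants) are genuinely nonlinear, which is exactly why $\mathfrak{M}_{\mathcal{T}}$ fails to be convex; in the $g$-variables they repackage as positive-definiteness of a matrix depending affinely on the variables, and convexity, openness, and the cone property fall out simultaneously. The main thing to verify carefully is the equivalence between the Gram-matrix positivity and the geometric admissibility of $\sigma$ and all its faces, but this is a classical application of the Gram--Schmidt realization of a positive definite symmetric matrix.
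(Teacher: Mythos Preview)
Your argument is correct and essentially identical to the paper's: both reduce admissibility of each top simplex to positive definiteness of the Gram matrix $G^\sigma_{ij}=\tfrac{1}{2}(g_{0i}+g_{0j}-g_{ij})$, use that $\mathcal{P}_n$ is an open convex cone, and obtain $\mathfrak{M}_{\mathcal{T}}^2$ as a finite intersection over $\sigma\in\mathcal{T}_n$. The paper attributes the single-simplex observation to Rivin and phrases the bijection slightly differently, but the content and structure of the proof match yours.
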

\noindent
\begin{proof}
Rivin \cite{R} first observed this fact for a single simplex case. Gu \emph{et al} \cite{Gu} proved this fact for $n=2$ by direct calculation. The proof here follows from Rivin's idea. For an $n$-simplex $\Delta$ embedded in the Euclidean space, we label all vertices as $v_0, v_1, \cdots, v_n$ and all $\frac{n(n+1)}{2}$ edges as $l_{01}, \cdots, l_{n-1\,n}$. For brevity, let $n^*=\frac{n(n+1)}{2}$, then we need to show
\begin{center}
$\mathfrak{M}_{\Delta}^2=\big\{\left(l^2_{01}, \cdots, l^2_{n-1\,n}\right)\in\mathds{R}^{n^*}\big|\:l_{01}, \cdots, l_{n-1\,n}$ are edges of some Euclidean $n$-simplex$\big\}$
\end{center}
is convex. Construct a map from $\mathfrak{M}_{\Delta}^2$ to the set of all symmetric $n\times n$ matrices, which transforms $(l^2_{01}, \cdots,l^2_{n-1\,n})$ to
\begin{displaymath}
\frac{1}{2}\left(
\begin{array}{ccccc}
 2l_{01}^2& l_{01}^2+l_{02}^2-l_{12}^2 & l_{01}^2+l_{03}^2-l_{13} & \cdots  & l_{01}^2+l_{0n}^2-l_{1n}^2\\
  * & 2l_{02}^2 & \l_{02}^2+l_{03}^2-l_{23}^2 & \cdots & l_{02}^2+l_{0n}^2-l_{2n}^2 \\
  * &  * & 2l_{03}^2 & \cdots &  l_{03}^2+l_{0n}^2-l_{3n}^2 \\
  \vdots &\vdots&\vdots& \ddots &  \vdots  \\
  * & * & * &\cdots& 2l_{0n}^2
\end{array}
\right).
\end{displaymath}
The above matrix is the Gram matrix of $n$ linear independent vectors $\vec{01}, \vec{02},\cdots,\vec{0n}$ and hence is positive definite. Obviously, the map is injective and surjective. Note that the set of all positive definite $n\times n$ matrices is a nonempty open convex subset of $\mathds{R}^{n^*}$. Thus
$\mathfrak{M}_{\Delta}^2$ is also a nonempty open convex subset of $\mathds{R}^{n^*}$.

Next we prove the theorem for general triangulations. Assuming all edges are labeled in turn as $e_1, \cdots, e_m$, where $m=|\mathcal{T}_1|$. Then for any $n$-simplex $\Delta=(v_0,\cdots,v_n)$ with edges $e_{i_1},\ldots, e_{i_{n^*}}$, ($i_1,\cdots,i_{n^*}\in\{1, 2, \cdots, m\}$), denote
$$\widetilde{\mathfrak{M}}_{\Delta}^2=\big\{\big(\cdots,l^2_{i_1},\cdots,l^2_{i_2},\cdots,l^2_{i_{n^*}},\cdots\big)\big|\big(l^2_{i_1},\cdots, l^2_{i_{n^*}}\big)\in\mathfrak{M}_{\Delta}^2 \big\}=\mathfrak{M}_{\Delta}^2\times\mathds{R}^{m-{n^*}},$$
we have

$$\mathfrak{M}_{\mathcal{T}}^2=\mathop{\cap}\limits_{\Delta \in \mathcal{T}_n} \widetilde{\mathfrak{M}}_{\Delta}^2.$$
This implies that $\mathfrak{M}_{\mathcal{T}}^2$ is a nonempty connected open convex cone of $\mathds{R}^m$.\qed
\end{proof}

\section{An interpretation of Regge's Einstein-Hilbert action}
Now we deal with three dimensional case. We give an interpretation to three dimensional Regge's Einstein-Hilbert action. The idea here is nature when taking $l^2$ as the analog of Riemann metric tensor $g$. Given an Euclidean tetrahedron $\{i,j,k,l\}\in \mathcal{T}_3$, the dihedral angle at edge $\{i,j\}$ is denoted as $\beta_{ij,kl}$. The discrete Ricci curvature $R_{ij}$ at edge $\{i,j\}$ is
\begin{equation}
R_{ij}=2\pi-\sum_{\{i,j,k,l\}\in \mathcal{T}_3}\beta_{ij,kl},
\end{equation}
where the sum is taken over all tetrahedrons having $\{i,j\}$ as one of their edges (If this edge is at the boundary of the triangulation, then the discrete Ricci curvature should be $R_{ij}=\pi-\sum_{\{i,j,k,l\}\in \mathcal{T}_3}\beta_{ij,kl}$). The Regge's Einstein-Hilbert action \cite{Re,CG,Gn} in this case is
\begin{equation}
\mathcal{E}=\sum_{j\thicksim i}R_{ij}l_{ij}.
\end{equation}

Denote $\alpha_{i,jkl}$ be the solid angle at vertex $i$ in a single tetrahedron. Cooper and Rivin \cite{CR} once defined a combinatorial scalar curvature at vertex $i$ as
$$S_i^{CR}=4\pi-\sum_{\{i,j,k,l\}\in \mathcal{T}_3}\alpha_{i,jkl},$$
where the sum is taken over all tetrahedrons having $i$ as one of their vertices. This definition satisfies the following combinatorial equality at each vertex $i$
\begin{equation}
S_i^{CR}=\sum_{j\thicksim i}R_{ij},
\end{equation}
which can be proved by Euler's characteristic formula for spheres. However, Cooper and Rivin's curvature is scaling invariant, hence it does not perform as well as smooth scalar curvature when the metric scales. In the following, we denote $g^{ij}$ as the inverse of $g_{ij}$, \emph{i.e.} $g^{ij}=g_{ij}^{-1}$. For any function $f$ defined on all edges, we can take a combinatorial trace at vertex $i$ with respect to $g$ as $(tr_gf)_i=\sum_{j\thicksim i}g^{ij}f_{ij}$. Thus we may define the discrete scalar curvature as $S=tr_gR$,\emph{ i.e.}, the discrete scalar curvature at vertex $i$ is
\begin{equation}
S_i=(tr_gR)_i=\sum_{j\thicksim i}g^{ij}R_{ij}.
\end{equation}
We also define a combinatorial volume element at vertex $i$ as
\begin{equation}
V_i=\sum_{j\thicksim i}l_{ij}^3.
\end{equation}

The volume element and scalar curvature can be multiplied together by a combinatorial multiplication $\ast$, which is defined as
\begin{equation}
(S\ast V)_i=\sum_{j\thicksim i}g^{ij}R_{ij}l_{ij}^3=\sum_{j\thicksim i}R_{ij}l_{ij}.
\end{equation}
Taking $(S\ast V)_i$ as a combinatorial analogue of smooth $Sdvol$, we can get
\begin{equation}
\mathcal{E}=\frac{1}{2}\sum_i(S\ast V)_i=\sum_{j\thicksim i}R_{ij}l_{ij},
\end{equation}
which is a combinatorial analogue of smooth Einstein-Hilbert functional $\int_M Sdvol$.

\section{Discrete Einstein metric}
Let $V=\sum_iV_i$ be the discrete volume of the manifold $M$. It is a combinatorial analogue of smooth volume $\int_M dvol$. The normalized Einstein-Hilbert functional
$$\frac{\int_M Sdvol}{(\int_M dvol)^{1-\frac{2}{3}}}$$
plays an essential role in studying three dimensional Yamabe problem. This inspires us to consider a normalized Regge's Einstein-Hilbert action
\begin{equation}
Q(g)=\frac{\mathcal{E}}{V^{\frac{1}{3}}}=\frac{\sum\limits_{i\sim j} R_{ij}l_{ij}}{\Big(\sum\limits_{i\sim j} l^3_{ij}\Big)^{\frac{1}{3}}}.
\end{equation}

Since $R_{ij}$ and $g_{ij}=l^2_{ij}$ are somewhat like smooth Ricci curvature $Ric$ and smooth metric $g$ respectively, the Einstein metric $g$ with $Ric=\lambda g$ on smooth manifolds $M$ can be transformed to a discrete version, \emph{i.e.}, a combinatorial metric $g$ with $R=\lambda g$ on a triangulated manifold $(M^3, \mathcal{T})$. This fact inspires us to define the following discrete Einstein metric
\begin{definition}\label{Def-2orderDEM}
Given a compact manifold $M^3$ with triangulation $\mathcal{T}$. A piecewise linear metric $g$ is called a discrete Einstein metric if there exists $\lambda$ such that $R=\lambda g$.
\end{definition}

It is easy to see that, if $R=\lambda g$, or say $g$ is a discrete Einstein metric, then
\begin{equation}
\lambda=\frac{\mathcal{E}}{V}=\frac{\sum\limits_{i\sim j} R_{ij}l_{ij}}{\sum\limits_{i\sim j} l^3_{ij}}.
\end{equation}

\begin{theorem}
Given a manifold $M^3$ with triangulation $\mathcal{T}$, then a piecewise linear metric $g$ is a discrete Einstein metric if and only if it is a critical point of the normalized Regge's Einstein-Hilbert action.
\end{theorem}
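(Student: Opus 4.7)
The plan is to compute the Euler--Lagrange equations of $Q(g)=\mathcal{E}/V^{1/3}$ in the variables $g_{ij}=l_{ij}^2$ and read off the discrete Einstein condition directly. The crucial ingredient is the classical Schl\"afli identity in the form used by Regge: if one regards $\mathcal{E}$ as a function of the edge lengths $l_{ij}$, then $\partial\mathcal{E}/\partial l_{ij}=R_{ij}$ (up to a fixed combinatorial factor coming from how the summation in $\mathcal{E}=\sum_{i\sim j}R_{ij}l_{ij}$ counts edges). The reason is that expanding $\mathcal{E}=2\pi\sum_{i\sim j} l_{ij}-\sum_{\Delta\in\mathcal{T}_3}\sum_{e\subset\Delta}l_e\,\theta^{\Delta}_{e}$ and differentiating, the internal variation $\sum_{e\subset\Delta}l_e\,d\theta^{\Delta}_{e}$ vanishes on each Euclidean tetrahedron $\Delta$ by the (local) Schl\"afli identity, so only the explicit $l$-dependence survives.

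Given this, the chain rule $\partial l_{ij}/\partial g_{ij}=1/(2l_{ij})$ yields $\partial\mathcal{E}/\partial g_{ij}=R_{ij}/(2l_{ij})$, and a direct computation with $V=\sum_{i\sim j} l_{ij}^{3}=\sum_{i\sim j} g_{ij}^{3/2}$ gives $\partial V/\partial g_{ij}=\tfrac{3}{2}l_{ij}$ (with the same combinatorial normalisation). Substituting into
$$\frac{\partial Q}{\partial g_{ij}}=V^{-1/3}\frac{\partial\mathcal{E}}{\partial g_{ij}}-\tfrac{1}{3}\mathcal{E}\,V^{-4/3}\,\frac{\partial V}{\partial g_{ij}}$$
and clearing common factors, the condition $\partial Q/\partial g_{ij}=0$ collapses to
$$R_{ij}=\frac{\mathcal{E}}{V}\,l_{ij}^{2}=\frac{\mathcal{E}}{V}\,g_{ij},$$
which is precisely $R=\lambda g$ with $\lambda=\mathcal{E}/V$, matching Definition~\ref{Def-2orderDEM}. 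For the converse, assuming $R=\lambda g$ we get $\mathcal{E}=\sum_{i\sim j}R_{ij}l_{ij}=\lambda\sum_{i\sim j} l_{ij}^{3}=\lambda V$, so $\lambda=\mathcal{E}/V$, and plugging back into the gradient formula shows $\partial Q/\partial g_{ij}=0$ edge by edge.

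The only real obstacle is the invocation of Regge's variational identity; fortunately this is a classical fact (this is exactly the feature that makes the Regge action natural) and has already been cited in the previous section. After that, the argument is a routine chain-rule computation. The mild subtlety worth flagging is bookkeeping of the summation convention in $\sum_{i\sim j}$: the same combinatorial constant appears in both $\partial\mathcal{E}/\partial g_{ij}$ and $\partial V/\partial g_{ij}$, so it cancels when one forms $\partial Q/\partial g_{ij}$, which is what makes the clean identification $\lambda=\mathcal{E}/V$ possible. It is also worth noting en route that $Q$ is scale-invariant (under $g\mapsto cg$, $R$ is unchanged while $\mathcal{E}\mapsto c^{1/2}\mathcal{E}$ and $V^{1/3}\mapsto c^{1/2}V^{1/3}$), which is compatible with the Einstein equation $R=\lambda g$ being invariant under rescaling of $g$ (with $\lambda\mapsto\lambda/c$).
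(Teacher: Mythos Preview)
Your proof is correct and follows essentially the same route as the paper: both use the Schl\"afli identity to obtain $d\mathcal{E}=\sum_{i\sim j}R_{ij}\,dl_{ij}$, then compute $\nabla_gQ$ via the chain rule to arrive at the formula $\partial Q/\partial g_{ij}=\tfrac{1}{2}V^{-1/3}g_{ij}^{-1/2}(R_{ij}-\lambda g_{ij})$, from which the equivalence is immediate. The paper simply records this gradient formula in matrix form without writing out the intermediate steps you supply, but the argument is identical.
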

\begin{proof}
The Schl$\ddot{a}$fli formula \cite{M} says
$$\sum_{i\sim j}l_{ij}dR_{ij}=0.$$
Hence we get $d\mathcal{E}=\sum\limits_{i\sim j}R_{ij}dl_{ij}$. Then
\begin{equation*}
\nabla_{g}Q=\frac{V^{-\frac{1}{3}}}{2}diag^{-\frac{1}{2}}\{g_1,\cdots,g_m\}(R-\lambda g),
\end{equation*}
which implies the conclusion. \qed
\end{proof}

We want to know how many discrete Einstein metrics there are for a fixed three manifold $M$ with triangulation. How to find them? Can we triangulate $M$ suitably, so as $M$ admits a discrete Einstein metric? We introduce a new topological-combinatorial invariant, which carries the information of the triangulation,
\begin{equation}
Y_{M, \mathcal{T}}=\inf_{g\in \mathfrak{M}_{\mathcal{T}}^2} Q(g)
\end{equation}
associated with a fixed triangulation $\mathcal{T}$ on a fixed manifold $M^3$. We also introduce a pure topological invariant $Y_M=\sup_\mathcal{T} Y_{M, \mathcal{T}}$ associated with a fixed manifolds $M^3$, where the supremum is taken from all triangulations of $M$. We hope the study of these two combinatorial and topological invariants will deepen the understanding of discrete Einstein metrics.

\section{Discrete Einstein metric of $\alpha$-order}\label{Remark-alpha metric}

Similar to the works by Ge and Xu \cite{GX1,GX2}, almost all procedures about discrete metric $g_i$ and discrete curvature $R_i$ can be generalized to $\alpha$ order. We can define $\alpha$-order discrete Einstein metric, \emph{i.e.} a metric $l$ satisfying $R=\lambda_{\alpha} l^{\alpha}$ ($\alpha\in\mathds{R}$).

The logic behind this procedure is to take $l^{\alpha}$ as a metric (of $\alpha$ order). From the viewpoint of Riemannian geometry, a piecewise flat metric is a singular Riemannian metric on $M^{3}$, which produces conical singularities at all vertices. For any $\alpha \in \mathds{R}$, a metric $g$ with conical singularity at a point can be expressed as
$$g(z)=e^{f(z)}\frac{dzd\bar{z}}{|z|^{2(1-\alpha)}}$$
locally. If letting $f(z)=-\ln(\alpha^2)$, then $g(z)=|dz^{\alpha}|^{2}$. Compared $l^{\alpha}$ with $|dz^{\alpha}|$, the $\alpha$-metric $l^{\alpha}$ could be considered as a discrete analogue of conical metric to some extent. Inspired by the essential role the conical metric plays in the study of smooth canonical or extremal metrics, we want to know what happens if we triangulates the manifold and evaluates the piecewise linear metrics of it. It seems that canonical or extremal metrics in the piecewise linear metric case should be a metric $l^{\alpha}$ parallels to the discrete Ricci curvature $R$, \emph{i.e.}, a discrete Einstein metric of $\alpha$-order.

It is easy to see that, if $l$ is a discrete Einstein metric of $\alpha$-order, then
\begin{equation}
\lambda_{\alpha}=\frac{\sum\limits_{i\sim j} R_{ij}l_{ij}}{\sum\limits_{i\sim j} l^{\alpha+1}_{ij}}.
\end{equation}
Furthermore, $\alpha$-order discrete Einstein metrics are critical points of the $\alpha$-order normalized Regge's Einstein-Hilbert action
\begin{equation}
Q_{\alpha}=\frac{\sum\limits_{i\sim j} R_{ij}l_{ij}}{\Big(\sum\limits_{i\sim j} l^{\alpha+1}_{ij}\Big)^\frac{1}{\alpha+1}}.
\end{equation}
By this definition, the discrete Einstein metric in Definition \ref{Def-2orderDEM} is in fact a $2$-order discrete Einstein metric, the discrete Einstein metric defined in \cite{G} is actually a $1$-order discrete Einstein metric, and the discrete Ricci flat metric is a $0$-order discrete Einstein metric.

\section{Discrete Ricci flow}
\begin{definition}
Given a manifold $M^3$ with triangulation $\mathcal{T}$, the discrete Ricci flow is
\begin{equation}\label{CRF}
\frac{dg_{ij}}{dt}=-2R_{ij}, \quad\mathrm{or}\quad \frac{dg}{dt}=-2R.
\end{equation}
\end{definition}

It is useful to consider the normalized discrete Ricci flow
\begin{equation}\label{NCRF}
\frac{dg_{ij}}{dt}=\frac{2}{3}rg_{ij}-2R_{ij}, \quad\mathrm{or}\quad \frac{dg}{dt}=\frac{2}{3}rg-2R,
\end{equation}
where $r=3\lambda=3\mathcal{E}/V$.

Discrete Ricci flow (\ref{NCRF}) takes the same form as the smooth Ricci flow $\frac{\partial g}{\partial t}=\frac{2}{n}rg-2Ric$ with $n=3$. It is easy to have
\begin{proposition}
Along the normalized discrete Ricci flow (\ref{NCRF}), $V$ is a constant, $\mathcal{E}$ is descending and bounded. Moreover, Ricci flow (\ref{NCRF}) is a negative gradient flow. \qed
\end{proposition}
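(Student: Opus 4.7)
The plan is to verify the three assertions in turn, each via a short computation built on $g_{ij}=l_{ij}^2$, the Schläfli identity (just exploited in the previous theorem), and the normalizing choice $r=3\mathcal{E}/V$. For $\dot V$, differentiating $V=\sum_{i\sim j} l_{ij}^3$ along (\ref{NCRF}) and using $\dot l_{ij}^3 = \tfrac{3}{2} l_{ij}\dot g_{ij}$ reduces $\dot V$ to $rV - 3\mathcal{E}$, which vanishes by the very definition of $r$.

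For the monotonicity of $\mathcal{E}$, Schläfli kills the $\dot R_{ij}$ contribution and leaves $\dot{\mathcal{E}} = \sum \frac{R_{ij}}{2l_{ij}}\dot g_{ij}$. Substituting (\ref{NCRF}) and completing the square via $R_{ij}^2 = (R_{ij}-\lambda g_{ij})^2 + 2\lambda g_{ij}R_{ij} - \lambda^2 g_{ij}^2$, combined with the bookkeeping identities $\sum g_{ij}R_{ij}/l_{ij} = \mathcal{E}$ and $\sum g_{ij}^2/l_{ij} = V$, collapses the expression to
\begin{equation*}
\dot{\mathcal{E}} = -\sum_{i\sim j} \frac{(R_{ij}-\lambda g_{ij})^2}{l_{ij}} \leq 0,
\end{equation*}
so $\mathcal{E}$ is nonincreasing, with equilibria exactly the discrete Einstein metrics. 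Since $V$ is constant along the flow, $l_{ij}\leq V_0^{1/3}$ is an a priori upper bound; as each dihedral angle lies in $(0,\pi)$ and the number of tetrahedra meeting any edge is a fixed combinatorial constant of $\mathcal{T}$, $|R_{ij}|$ is uniformly bounded, whence $|\mathcal{E}|\leq (\max |R_{ij}|)\sum l_{ij}$ is bounded.

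For the gradient-flow interpretation I would recycle the formula $\nabla_{g_{ij}} Q = \frac{V^{-1/3}}{2l_{ij}}(R_{ij}-\lambda g_{ij})$ derived in the previous theorem: (\ref{NCRF}) then reads $\dot g_{ij} = -4V^{1/3} l_{ij}\,\nabla_{g_{ij}} Q$, exhibiting the flow as the negative gradient flow of $Q$ with respect to the weighted inner product $\langle u,v\rangle_g := \sum l_{ij}^{-1} u_{ij}v_{ij}$ (equivalently, since $V$ is preserved, as the negative gradient flow of $\mathcal{E}$ restricted to the level set $\{V=V_0\}$). The only step that is not purely mechanical is the square-completion in the $\mathcal{E}$ calculation: a direct Cauchy--Schwarz bound yields only $\dot{\mathcal{E}}\leq 0$, whereas the square-completion identity both shortens the argument and correctly pinpoints the critical set.
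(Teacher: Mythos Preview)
Your argument is correct. The paper states this proposition without proof (the \qed\ appears immediately after the statement), so there is nothing to compare against; your computations---$\dot V=rV-3\mathcal{E}=0$, the square-completion giving $\dot{\mathcal{E}}=-\sum l_{ij}^{-1}(R_{ij}-\lambda g_{ij})^2\le 0$, the combinatorial bound on $R_{ij}$ together with $l_{ij}^3\le V_0$ for boundedness, and the identification $\dot g_{ij}=-4V^{1/3}l_{ij}\,\nabla_{g_{ij}}Q$---are exactly the natural ones and fill in the omitted details.
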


\begin{theorem}
If the solution $g(t)$ of the discrete Ricci flow (\ref{NCRF}) converges to a non-degenerate metric $g(+\infty)$, then $g(+\infty)$ is a discrete Einstein metric.\qed
\end{theorem}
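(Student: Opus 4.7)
The plan is to use the convergence hypothesis together with continuity of the right-hand side of \eqref{NCRF} to force the velocity of $g(t)$ to vanish in the limit, and then read the Einstein condition directly off the flow equation. Non-degeneracy of $g(+\infty)$ should mean that $g(+\infty)$ lies in the open convex cone $\mathfrak{M}_{\mathcal{T}}^2$ (Theorem \ref{Thm-converx-metric-spc}); on this interior every tetrahedron is non-flat, so each dihedral angle, and hence each $R_{ij}$ and the global quantity $r=3\mathcal{E}/V$, depends smoothly on $g$ in a neighborhood of $g(+\infty)$.

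By this continuity, the right-hand side $F_{ij}(g):=\tfrac{2}{3}r(g)g_{ij}-2R_{ij}(g)$ of \eqref{NCRF} satisfies $F_{ij}(g(t))\to F_{ij}(g(+\infty))$ as $t\to+\infty$. Now I would invoke the standard ODE fact that if $g(t)$ converges to a finite limit while $\dot g(t)$ tends to some vector $v$, then necessarily $v=0$: otherwise $g(t)-g(0)=\int_0^t\dot g(s)\,ds$ would diverge in the direction of $v$, contradicting the convergence of $g(t)$. Combining the two observations yields $F_{ij}(g(+\infty))=0$ for every edge $\{i,j\}$, i.e.\ $R_{ij}(+\infty)=\tfrac{r(+\infty)}{3}g_{ij}(+\infty)$, which is precisely the Einstein condition of Definition \ref{Def-2orderDEM} with $\lambda=r(+\infty)/3$.

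The only real subtlety is the first step: one must interpret ``non-degenerate'' as meaning $g(+\infty)$ actually lies in the \emph{open} set $\mathfrak{M}_{\mathcal{T}}^2$, not merely in its closure, so that every $R_{ij}$ remains finite and depends continuously on $g$ at the limit. If some tetrahedron were to collapse in the limit, dihedral angles could jump and the argument would break. A Lyapunov-style alternative would invoke the preceding proposition directly: $\mathcal{E}$ is bounded and monotone decreasing, hence converges, and the negative-gradient-flow structure then forces the gradient (proportional to $R-\lambda g$ by the calculation in the previous theorem) to vanish at $g(+\infty)$; either route leads to the same conclusion, but the direct ODE argument is cleaner and avoids re-deriving the gradient identity.
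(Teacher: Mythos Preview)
Your argument is correct and is exactly the standard one; the paper itself gives no proof at all (the statement is followed immediately by \qed), so you have simply written out the routine verification the authors leave to the reader. Your care about ``non-degenerate'' meaning $g(+\infty)\in\mathfrak{M}_{\mathcal{T}}^2$ is the right reading, and the ODE lemma you invoke is precisely what is needed.
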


Recall that the Schl$\ddot{a}$fli formula \cite{M} implies $\sum\limits_{i=1}^ml_idR_i=0$ and hence $d\mathcal{E}=\sum\limits_{i=1}R_{i}dl_{i}$. Thus we have
$\nabla_l\mathcal{E}=(\frac{\partial \mathcal{E}}{\partial l_1},\cdots,\frac{\partial \mathcal{E}}{\partial l_m})^T=R$. Denote $L=\frac{\partial (R_{1},\cdots,R_{m})}{\partial(l_{1},\cdots,l_{m})}$, then $L=Hess_l\mathcal{E}$ is a symmetric matrix. However, generally $L$ is not positive definite \cite{G}. Denote
$$\Delta_{L}=\frac{\partial (R_{1},\cdots,R_{m})}{\partial(g_{1},\cdots,g_{m})},$$
then $\Delta_{L}$ could be thought of as a discrete version of Lichnerowicz Laplacian. Denote $\lambda_{inf}(\Delta_{L})$ as the smallest eigenvalue of $\Delta_{L}$, then we have

\begin{theorem}
Let $g^*$ be a discrete Einstein metric. If $\lambda_{inf}(\Delta_{L}^*)>\lambda^*$ and the initial metric $g(0)$ deviates from $g^*$ not so much, then the solution to discrete Ricci flow (\ref{NCRF}) exists for all $t\geq0$ and converges exponentially fast to the discrete Einstein metric $g^*$.
\end{theorem}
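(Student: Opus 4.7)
The plan is to treat the normalized discrete Ricci flow (\ref{NCRF}) as an autonomous ODE $\dot g=F(g)$ on the open convex cone $\mathfrak{M}_{\mathcal{T}}^{2}$ supplied by Theorem \ref{Thm-converx-metric-spc}, verify that $g^{*}$ is an equilibrium, linearize at $g^{*}$, and invoke the classical principle of linearized stability. Writing $F(g)=\tfrac{2}{3}r(g)\,g-2R(g)$ with $r(g)=3\mathcal{E}(g)/V(g)$, the Einstein relation $R^{*}=\lambda^{*}g^{*}$ yields $r(g^{*})=3\lambda^{*}$ and hence $F(g^{*})=0$. The Jacobian at $g^{*}$ unfolds as
$$DF|_{g^{*}}(h)\;=\;\tfrac{2}{3}\bigl(Dr|_{g^{*}}\!\cdot h\bigr)\,g^{*}\;+\;2\lambda^{*}h\;-\;2\Delta_{L}^{*}h,$$
with $DR|_{g^{*}}=\Delta_{L}^{*}$ by definition, and with $Dr|_{g^{*}}$ computable through $r=3\mathcal{E}/V$, the Schl$\ddot{a}$fli identity $d\mathcal{E}=\sum R_{ij}\,dl_{ij}$, and the substitution $g_{ij}=l_{ij}^{2}$.

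The decisive reduction exploits the proposition above: $V$ is constant along (\ref{NCRF}), so orbits with $V(g(0))=V^{*}$ stay on the hypersurface $\Sigma=\{V=V^{*}\}$. A direct Euler-type calculation at an Einstein metric ($\partial\mathcal{E}/\partial g_{ij}|_{g^{*}}=\tfrac{\lambda^{*}}{2}l^{*}_{ij}$ against $\partial V/\partial g_{ij}|_{g^{*}}=\tfrac{3}{2}l^{*}_{ij}$) shows $d\mathcal{E}|_{g^{*}}=\tfrac{\lambda^{*}}{3}\,dV|_{g^{*}}$, whence $Dr|_{g^{*}}\cdot h=0$ for every $h\in T_{g^{*}}\Sigma$. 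On this invariant tangent space the linearization therefore collapses to
$$\mathcal{A}h\;:=\;-2\bigl(\Delta_{L}^{*}-\lambda^{*}I\bigr)h,$$
which under the hypothesis $\lambda_{inf}(\Delta_{L}^{*})>\lambda^{*}$ is symmetric and strictly negative definite, with spectral gap bounded above by $-2(\lambda_{inf}(\Delta_{L}^{*})-\lambda^{*})<0$.

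With a genuine spectral gap secured I would close with a standard asymptotic-stability argument: taking $\|g-g^{*}\|^{2}$ as a Lyapunov function on $\Sigma$ and combining the Taylor expansion $F(g)=\mathcal{A}(g-g^{*})+O(\|g-g^{*}\|^{2})$ with the spectral gap yields a differential inequality $\tfrac{d}{dt}\|g-g^{*}\|^{2}\le -c\,\|g-g^{*}\|^{2}$ in a small enough neighbourhood of $g^{*}$. This gives a $\delta>0$ such that $\|g(0)-g^{*}\|<\delta$ with $V(g(0))=V^{*}$ forces global existence and exponential convergence to $g^{*}$. The case $V(g(0))\neq V^{*}$ is absorbed by first rescaling $g(0)$ uniformly to match $V^{*}$ (scale-invariance of $R$ guarantees that rescaled Einstein metrics remain Einstein) and rescaling back at the end.

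The hard part will be the explicit verification that $Dr|_{g^{*}}$ annihilates $T_{g^{*}}\Sigma$, together with the check that the nonlinear remainder is genuinely dominated by the spectral gap along the constrained flow; both hinge on the smoothness of $R(g)$ on the convex cone $\mathfrak{M}_{\mathcal{T}}^{2}$ established in Theorem \ref{Thm-converx-metric-spc}, which makes the ODE classically well-posed and its linearization a legitimate object to analyse.
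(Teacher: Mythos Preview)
Your plan---linearize the autonomous flow at the equilibrium $g^*$ and invoke linearized stability, disposing of the scaling direction by restricting to the invariant level set $\{V=V^*\}$---is exactly the paper's argument; the paper records the full Jacobian $\tfrac12 D_{g^*}\Gamma=\lambda^*\bigl(I_m-l^{*2}l^{*T}/V^*\bigr)-\Delta_L^*$ and notes its spectrum is negative ``up to scaling of $l^{3/2}$'', which is the same content as your restriction to $T_{g^*}\Sigma=\ker\, l^{*T}$ where the rank-one term drops out. One small slip: $\Delta_L^*=\tfrac12 L^*\,\mathrm{diag}(l^*)^{-1}$ is not symmetric but only similar (via conjugation by $\mathrm{diag}(l^*)^{1/2}$) to the symmetric matrix $\tfrac12\,\mathrm{diag}(l^*)^{-1/2}L^*\mathrm{diag}(l^*)^{-1/2}$, so either replace your Lyapunov function by the weighted norm $\|\mathrm{diag}(l^*)^{-1/2}(g-g^*)\|^2$ or quote the principle of linearized stability directly for real spectrum---the spectral gap and the conclusion are unaffected.
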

\begin{proof}
The proof of this theorem is similar to those appeared in \cite{G,GX2,GX3}.
We show that $g^*$ is an asymptotically stable point of the normalized discrete Ricci flow (\ref{NCRF}). Denote $\Gamma(g)=\frac{2}{3}rg-2R=2(\lambda g-R)$, then $g^*$ is a critical point of $\Gamma(g)$. We denote $\Sigma=diag\{l_1,\cdots,l_m\}$ and differentiate $\Gamma(g)$ at $g^*$,
\begin{equation}
\frac{1}{2}D_{g^*}\Gamma(g)=\lambda^* (I_{m}-\frac{l^{*2}l^{*T}}{V^*})-\Delta_{L}^*.
\end{equation}
If $\lambda_{inf}(\Delta_{L}^*)>\lambda^*$, then by similar methods in \cite{G,GX2,GX3}, we can show that all eigenvalues of the matrix $D_{g^*}\Gamma(g)$ are negative (up to scaling of metric $l^{\frac{3}{2}}$). The proof is completed. \qed\\[2pt]
\end{proof}

Similarly, for any $\alpha\in \mathds{R}-\{0,-1\}$, we can also define the normalized $\alpha$-order discrete Ricci flow as
\begin{equation}\label{NCRF-alpha-order}
\frac{dl_{ij}^{\alpha}}{dt}=2\lambda_{\alpha}l_{ij}^{\alpha}-2R_{ij},
\end{equation}
which can be written in vector form as
\begin{equation}
\frac{dl^{\alpha}}{dt}=2\lambda_{\alpha}l^{\alpha}-2R.\\[2pt]
\end{equation}

\begin{remark}
We have recently learned that R. Schrader got the same result (Theorem 3.1, \cite{Sc}) as our Theorem \ref{Thm-converx-metric-spc} in this paper.
R. Schrader \cite{Sc} also provided some interesting analogues of Ricci tensor and Ricci flow in the theory of piecewise linear spaces.\\[2pt]
\end{remark}

\section{An example: 16-cell triangulation}
Discrete Ricci flow provides an efficient way to find discrete Einstein metrics on triangulated manifolds. To see the benefit of this method, we take the 16-cell triangulation of $\mathbb{S}^3$ as an example. Here 16-cell is defined as a triangulation $\mathcal{T}_s$ of $\mathbb{S}^3$ like this: take $A_{1}=(1,0,0,0)$, $A_{2}=(-1,0,0,0)$, $B_{1}=(0,1,0,0)$, $B_{2}=(0,-1,0,0)$, $C_{1}=(0,0,1,0)$, $C_{2}=(0,0,-1,0)$, $D_{1}=(0,0,0,1)$, $D_{2}=(0,0,0,-1)$ as the vertices of $\mathcal{T}_s$;  $P_{i}Q_{j}(\{P,Q\}\in\{A,B,C,D\},i,j=1,2)$ as the edges of $\mathcal{T}_s$; $P_{i}Q_{j}R_{k}$ $(\{P,Q,R\}\subset\{A,B,C,D\},i,j,k=1,2)$ as the faces of $\mathcal{T}_s$; and regular tetrahedrons $A_{i}B_{j}C_{k}D_{l}(i,j,k,l=1,2)$ as the tetrahedrons of $\mathcal{T}_s$.

Now we consider a topological triangulation $\mathcal{T}$ of $\mathbb{S}^{3}$, which has the same combinatorial structure with $16$-cell $\mathcal{T}_s$. Obviously, $\mathcal{T}$ carries a trivial $\alpha$-order discrete Einstein metric for each $\alpha$, \emph{i.e.} a metric with the same value on all edges. We want to know whether there exist other non-trivial discrete Einstein metrics? Let $l_{P_iQ_j}$ be the lengths of $P_iQ_j$, and denote $g_{P_iQ_j}=l^2_{P_iQ_j}$. Consider the normalized discrete Ricci flow
\begin{equation}
\frac{dg_{P_iQ_j}}{dt}=2\lambda g_{P_iQ_j}-2R_{P_iQ_j}.\\[2pt]
\label{16cell}
\end{equation}
We now specialize the parameters in equation (\ref{16cell}):
\begin{enumerate}
  \item $\lambda=\frac{\sum_{\{P_iQ_j\}}R_{P_iQ_j}l_{P_iQ_j}}{\sum_{\{P_iQ_j\}}l^3_{P_iQ_j}}$, the sum is taken over all the twenty four edges in $\mathcal{T}$.
  \item To illustrate the calculation of $R_{P_iQ_j}$, we take $R_{A_1B_1}$ as an example. Note that
\begin{equation}
R_{A_1B_1}=2\pi-\beta_{A_1B_1, C_1D_1}-\beta_{A_1B_1, C_1D_2}-\beta_{A_1B_1, C_2D_1}-\beta_{A_1B_1, C_2D_2},
\end{equation}
where $\beta_{A_1B_1, C_iD_j}$ ($i,j=1,2$) are the dihedral angles at edge $A_1B_1$,
\begin{equation}
\sin\beta_{A_1B_1, C_iD_j}=\frac{3l_{A_1B_1} V_{A_1B_1C_iD_j}}{2 S_{A_1B_1C_i}S_{A_1B_1D_j}},
\end{equation}
\begin{equation}
\begin{aligned}
12^2V_{A_1B_1C_iD_j}^ 2
=& l^2_{A_1B_1}l^2_{C_iD_j}(l^2_{A_1C_i}+l^2_{A_1D_j}+l^2_{B_1C_i}+l^2_{B_1D_j}-l^2_{A_1B_1}-l^2_{C_iD_j})\\
& + l^2_{A_1D_j}l^2_{B_1C_i}(l^2_{A_1B_1}+l^2_{C_iD_j}+l^2_{A_1C_i}+l^2_{B_1D_j}-l^2_{A_1D_j}-l^2_{B_1C_i})\\
& + l^2_{A_1C_i}l^2_{B_1D_j}(l^2_{A_1B_1}+l^2_{C_iD_j}+l^2_{A_1D_j}+l^2_{B_1C_i}-l^2_{A_1C_i}-l^2_{B_1D_j})\\
& - l^2_{A_1B_1}l^2_{B_1C_i}l^2_{A_1C_i}-l^2_{A_1C_i}l^2_{A_1D_j}l^2_{C_iD_j}-l^2_{A_1B_1}l^2_{A_1D_j}l^2_{B_1D_j}\\
& -l^2_{B_1C_i}l^2_{C_iD_j}l^2_{B_1D_j},
\end{aligned}
\end{equation}
\begin{equation}
S_{A_1B_1C_i}=\sqrt{P_{A_1B_1C_i}(P_{A_1B_1C_i}-l_{A_1B_1})(P_{A_1B_1C_i}-l_{A_1C_i})(P_{A_1B_1C_i}-l_{B_1C_i})},
\end{equation}
\begin{equation}
S_{A_1B_1D_j}=\sqrt{P_{A_1B_1D_j}(P_{A_1B_1D_j}-l_{A_1B_1})(P_{A_1B_1D_j}-l_{A_1D_j})(P_{A_1B_1D_j}-l_{B_1D_j})},
\end{equation}
where $P_{A_1B_1C_i}=(l_{A_1B_1}+l_{A_1C_i}+l_{B_1C_i})/2$ and $P_{A_1B_1D_j}=(l_{A_1B_1}+l_{A_1D_j}+l_{B_1D_j})/2$.
Similarly, we can calculate all the other $R_{P_iQ_j}$.
\end{enumerate}
It is easy to see that $g_{P_iQ_j}(t)\equiv 1$ is a trivial solution of equation (\ref{16cell}). With the aid of \textsf{Matlab} software, we can solve equation (\ref{16cell}) numerically. The numerical calculations show that discrete Ricci flow equation (\ref{16cell}) may develop singularities for some initial values. However, if we choose the initial values appropriately, the solution $g(t)$ to equation (\ref{16cell}) can converge and hence deforms the metric $g(t)$ to some discrete Einstein metric $g^*$.
Write the solution $g(t)$ as a $1\times 24$ row vector with order $l^2_{A_{1}B_{1}}$, $l^2_{A_{1}B_{2}}$, $l^2_{A_{2}B_{1}}$, $l^2_{A_{2}B_{2}}$, $l^2_{A_{1}C_{1}},\cdots,l^2_{A_{1}D_{1}},\cdots,l^2_{B_{1}C_{1}},\cdots,l^2_{B_{1}D_{1}},\cdots,l^2_{C_{1}D_{1}},\cdots$. Denote $\textbf{1}_{n}$ as a $1\times n$ row vector with all entries being $1$, then we can give two specific discrete Einstein metrics as follows:
\begin{enumerate}
  \item  $g^*=(a\textbf{1}_{12}, b\textbf{1}_{12})$, where $a=10.0095$, $b=6.9633$.
  \item  $g^*=(c\textbf{1}_{8}, d\textbf{1}_{4}, e\textbf{1}_{4}, c\textbf{1}_{8})$, where $c=58.7223$, $d=64.9735$, $e=52.1413$.
\end{enumerate}

\noindent
\textbf{Acknowledgements}. The first author would like to show his greatest respect to Professor Gang Tian who brought him to this research area. The authors would also like to thank Xu Xu, Ma Shiguang, Shen Liangming, Zhang Shijin for many helpful discussions. This work is supported by NSFC grant (Nos.11501027 and 11401499),
Natural Science Foundation of Fujian Province of China (No. 2015J05016), and Fundamental Research Funds for the Central Universities (No. 20720140524).

\bibliographystyle{amsplain}

(Huabin Ge)
Department of Mathematics, Beijing Jiaotong University, Beijing 100044, P.R. China

E-mail: hbge@bjtu.edu.cn\\[2pt]

(Jinlong Mei)
School of Mathematical Sciences, Xiamen University, Xiamen 361005, P.R. China

E-mail: mjl948512922@outlook.com\\[2pt]

(Da Zhou)
School of Mathematical Sciences, Xiamen University, Xiamen 361005, P.R. China

E-mail: zhouda@xmu.edu.cn\\[2pt]

\end{document}